\documentclass[12pt]{amsart}
\usepackage[T2A]{fontenc}
\usepackage[utf8]{inputenc}   
\usepackage[russian, english]{babel}
\usepackage{enumerate}

\usepackage{url}
\usepackage{hyperref}
\usepackage{amsmath}
\usepackage{amsthm}
\usepackage{amsfonts}
\usepackage{amssymb}
\usepackage{cite}
\usepackage{bbm}
\usepackage{mathptmx} 
\usepackage[scaled=0.90]{helvet} 
\usepackage{courier} 
\usepackage[T1]{fontenc}
\usepackage{color}

\usepackage{graphicx}
\DeclareGraphicsExtensions{.eps}

\newtheorem{theorem}{Theorem}[section]

\newtheorem*{example*}{Example}
\newtheorem{lemma}[theorem]{Lemma}

\newtheorem*{th*}{Theorem}
\newtheorem{prop}[theorem]{Proposition}
\theoremstyle{definition}
\newtheorem*{zamech*}{Remark}
\newtheorem{defin}[theorem]{Definition}

\setlength{\hoffset}{-15mm} \setlength{\voffset}{-15mm}
\setlength{\textheight}{235mm} \setlength{\textwidth}{150mm}

\def\sli{\sum\limits}
\def\ili{\int\limits}

\def\R{\mathbb{R}}

\def\ep{\varepsilon}
\def\vf{\varphi}

\def\E{\mathbb{E}}





\newcommand{\supp}{\operatorname{supp}}

\newcommand{\diam}{\operatorname{diam}}

\newcommand{\PP}{\mathcal{P}}

\renewcommand{\P}{\mathbb{P}}

\def\cyr{\fontencoding{OT2}\fontfamily{wncyr}\selectfont}
\DeclareTextFontCommand{\textcyr}{\cyr}


%
{\end{list}}


\newcounter{vremennyj}


\def\H{\mathcal{H}}

\date{July 2016}
\author{A. Reznikov}
\address{Center for Constructive Approximation, Department of Mathematics, Vanderbilt University}
\email{aleksandr.b.reznikov@vanderbilt.edu}
\author{E. B. Saff}
\email{edward.b.saff@vanderbilt.edu}
\author{O. V. Vlasiuk}

\email{oleksandr.v.vlasiuk@vanderbilt.edu}
\thanks{The research of the authors was supported, in part, by the National Science Foundation grants DMS-1516400 and DMS-1412428}
\title[Polarization for integrable kernels]{A minimum principle for potentials with application to Chebyshev constants}
\begin{document}
\begin{abstract}
For ``Riesz-like'' kernels $K(x,y)=f(|x-y|)$ on $A\times A$, where $A$ is a compact $d$-regular set $A\subset \R^p$, we prove a minimum principle for potentials $U_K^\mu=\int K(x,y)\textup{d}\mu(x)$, where $\mu$ is a Borel measure supported on $A$. Setting $P_K(\mu)=\inf_{y\in A}U^\mu(y)$, the $K$-polarization of $\mu$, the principle is used to show that if $\{\nu_N\}$ is a sequence of measures on $A$ that converges in the weak-star sense to the measure $\nu$, then $P_K(\nu_N)\to P_K(\nu)$ as $N\to \infty$. The continuous Chebyshev (polarization) problem concerns maximizing $P_K(\mu)$ over all probability measures $\mu$ supported on $A$, while the $N$-point discrete Chebyshev problem maximizes $P_K(\mu)$ only over normalized counting measures for $N$-point multisets on $A$. We prove for such kernels and sets $A$, that if $\{\nu_N\}$ is a sequence of $N$-point measures solving the discrete problem, then every weak-star limit measure of $\nu_N$  as $N \to \infty$ is a solution to the continuous problem. 
\end{abstract}
\maketitle

\vspace{4mm}

\footnotesize\noindent\textbf{Keywords}: Maximal Riesz polarization, Chebyshev constant, Hausdorff measure, Riesz potential, Minimum principle

\vspace{2mm}

\noindent\textbf{Mathematics Subject Classification:} Primary: 31C15, 31C20 ; Secondary: 30C80.

\vspace{2mm}

\normalsize
\section{Introduction}
For a nonempty compact set $A\subset \R^p$, a kernel $K\colon A\times A\to \R\cup \{\infty\}$ and a measure $\mu$ supported on $A$, the $K$-potential of $\mu$ is defined by
$$
U_K^\mu(y):=\int_A K(x,y)\textup{d}\mu(x), \;\; y\in \R^p.
$$
Assuming that $K$ is lower semi-continuous, the Fatou lemma implies that if $y_n\to y$ as $n\to \infty$, we have
$$
\liminf_{n} U_K^{\mu}(y_n)\geqslant U_K^{\mu}(y);
$$
thus $U_K^{\mu}$ is a lower semi-continuous function on $\R^p$.
We define the weak$^*$ topology on the space of positive Borel measures as follows.
\begin{defin}
Let $(\mu_n)_{n=1}^\infty$ be a sequence of positive Borel measures supported on a compact set $A$. We say that the measures $\mu_n$ converge to the measure $\mu$ in the weak$^*$ sense, $\mu_n \stackrel{*}{\to} \mu$, if for any function $\vf$ continuous on $A$ we have
$$
\ili \vf(x)\textup{d}\mu_n(x)\to \ili \vf(x)\textup{d}\mu(x), \;\;\; n\to \infty.
$$
\end{defin}For a measure $\mu$ supported on $A$ its {\it $K$-polarization} is defined by
$$
P_K(\mu):=\inf_{y\in A} U_K^{\mu}(y).
$$
In the following definition we introduce two special constants which denote the maximum value of $P_K(\mu)$ when $\mu$ ranges over all probability measures and when $\mu$ ranges over all probability measures supported on finite sets.
\begin{defin}
For a positive integer $N$ the {\it discrete $N$-th $K$-polarization
(or Chebyshev) constant of $A$} is defined by
\begin{equation}\label{discrpolar}
\mathcal{P}_K(A, N):=\sup_{\omega_N\subset A} \inf_{y\in A} U_K^{\nu_{\omega_N}}(y),
\end{equation}
where the supremum is taken over {\it $N$-point multisets} $\omega_N$; i.e., $N$-point sets counting multiplicities, and where $\nu_{\omega_N}$ is the normalized counting measure of $\omega_N$:
$$
\nu_{\omega_N}:=\frac1N \sum_{x\in \omega_N} \delta_{x}.
$$

Moreover, we say that the probability measure $\nu$ supported on $A$ {\it solves the continuous $K$-polarization problem} if
$$
\inf_{y\in A}U^\nu_K(y) = \sup_{\mu}\inf_{y\in A}U_K^\mu (y)=:T_K(A),
$$
where the supremum is taken over all probability measures $\mu$ supported on $A$.
\end{defin}

The following result has been known since 1960's; it relates the asymptotic behavior of $\PP_K(A, N)$ as $N\to\infty$ with $T_K(A)$.
\begin{theorem}[Ohtsuka, \cite{Ohtsuka1967}]
Assume $A\subset \R^p$ is a compact set and $K\colon A\times A\to (-\infty, \infty]$ is a lower semi-continuous symmetric kernel bounded from below. Then
\begin{equation}\label{ohtsuka}
\PP_K(A, N)\to T_K(A), \; \; N\to \infty.
\end{equation}
\end{theorem}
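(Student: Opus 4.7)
The plan is to split the theorem into two inequalities. The upper bound $\mathcal{P}_K(A,N)\leq T_K(A)$ is trivial, since each normalized counting measure $\nu_{\omega_N}$ is a probability measure, so its $K$-polarization cannot exceed the continuous supremum $T_K(A)$. The substantial content is the reverse estimate $\liminf_{N\to\infty}\mathcal{P}_K(A,N)\geq T_K(A)$, and I would prove this first for \emph{continuous} $K$ and then reduce a general lower semi-continuous $K$ to that case by monotone approximation from below.

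For the continuous case, fix $\varepsilon>0$ and pick a probability measure $\mu$ with $P_K(\mu)\geq T_K(A)-\varepsilon$. By a standard construction (partition $A$ into Borel pieces $A_1,\ldots,A_M$ of small diameter and place $\lfloor N\mu(A_i)\rfloor$ or $\lceil N\mu(A_i)\rceil$ points in each), one obtains $N$-point empirical measures $\nu_N\stackrel{*}{\to}\mu$. Uniform continuity of $K$ on the compact set $A\times A$ makes the family $\{K(\fdot,y):y\in A\}$ equicontinuous and uniformly bounded on $A$, so a finite-$\delta$-net argument upgrades the weak-$*$ convergence $\nu_N\stackrel{*}{\to}\mu$ to uniform convergence $U_K^{\nu_N}\to U_K^{\mu}$ on $A$. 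Consequently $P_K(\nu_N)\to P_K(\mu)$, so $\mathcal{P}_K(A,N)\geq P_K(\nu_N)\geq T_K(A)-2\varepsilon$ for all large $N$.

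For the general lower semi-continuous $K$ bounded from below, I would approximate by continuous Lipschitz kernels given by infimal convolution, $K_n(x,y)=\inf_{(x',y')}\{K(x',y')+n\,d((x,y),(x',y'))\}$, which satisfy $K_n\leq K$ and $K_n\uparrow K$ pointwise. Monotonicity yields $\mathcal{P}_{K_n}(A,N)\leq \mathcal{P}_K(A,N)$ and $T_{K_n}(A)\leq T_K(A)$, and the continuous case already handled gives $\liminf_N\mathcal{P}_K(A,N)\geq T_{K_n}(A)$ for every $n$. The proof will be complete once I show $T_{K_n}(A)\uparrow T_K(A)$: fixing $\mu_0$ with $P_K(\mu_0)\geq T_K(A)-\varepsilon$, it suffices to verify $\inf_y U_{K_n}^{\mu_0}(y)\to \inf_y U_K^{\mu_0}(y)$. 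To do so, I would pick $y_n\in A$ attaining $\inf_y U_{K_n}^{\mu_0}(y)$ (legal by continuity of $U_{K_n}^{\mu_0}$ and compactness of $A$), extract a subsequence $y_{n_k}\to y^{*}$, and for each fixed $m$ exploit continuity of $U_{K_m}^{\mu_0}$ together with the chain $U_{K_m}^{\mu_0}(y_{n_k})\leq U_{K_{n_k}}^{\mu_0}(y_{n_k})$ (valid for $n_k\geq m$) to conclude $U_{K_m}^{\mu_0}(y^{*})\leq \lim_n\inf_y U_{K_n}^{\mu_0}(y)$; then sending $m\to\infty$ and invoking monotone convergence gives $U_K^{\mu_0}(y^{*})\leq \lim_n\inf_y U_{K_n}^{\mu_0}(y)$, forcing equality.

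The main obstacle is precisely this last step: the interchange of the infimum over $y\in A$ with the monotone limit $K_n\uparrow K$. Pointwise monotone convergence of the potentials is not by itself enough to transport infima, and the argument genuinely uses both the compactness of $A$ (to extract a minimising subsequence $y_{n_k}$) and the regularity of the approximating potentials $U_{K_m}^{\mu_0}$ (continuous because each $K_m$ is bounded and continuous). Once this interchange is in hand, letting $n\to\infty$ produces $\liminf_N\mathcal{P}_K(A,N)\geq T_K(A)$, which combined with the trivial upper bound yields \eqref{ohtsuka}.
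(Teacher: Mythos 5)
The paper does not prove this statement---it is quoted from Ohtsuka's 1967 paper as a known result---so there is no in-paper argument to compare against; I can only assess your proof on its own terms, and it is correct and complete. The two-step reduction (continuous kernels by discretization of a near-optimal measure plus uniform convergence of potentials; general lower semi-continuous kernels by Moreau--Yosida approximation $K_n\uparrow K$) is sound, and the delicate step you single out, namely $\inf_y U_{K_n}^{\mu_0}(y)\to\inf_y U_{K}^{\mu_0}(y)$, is handled correctly: the monotonicity of the infima, the continuity of each $U_{K_m}^{\mu_0}$ (each $K_n$ is Lipschitz, hence the potential is Lipschitz with constant $n\mu_0(A)$), and compactness of $A$ are exactly what is needed, and combining the resulting $\liminf_N\mathcal{P}_K(A,N)\geq T_K(A)$ with the trivial bound $\mathcal{P}_K(A,N)\leq T_K(A)$ valid for every $N$ gives full convergence without any appeal to Fekete's lemma. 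Two cosmetic points: when $T_K(A)=+\infty$ (which the paper's two-point example shows can occur) you should pick $\mu_0$ with $P_K(\mu_0)\geq R$ for arbitrary $R$ rather than $\geq T_K(A)-\varepsilon$, and in the discretization step the mesh of the partition must be sent to $0$ slowly relative to $N$ (or one can simply bound $\|U_K^{\nu_N}-U_K^{\mu}\|_\infty$ by $\omega_K(\delta)+2\|K\|_\infty M/N$ directly); neither affects the argument. For context, the classical route (Ohtsuka, and the treatment in Borodachov--Hardin--Saff) first establishes existence of the limit via superadditivity of $N\mathcal{P}_K(A,N)$ and then identifies it with $T_K(A)$; your approximation-from-below argument bypasses the superadditivity step entirely, at the cost of invoking the standard representation of a lower semi-continuous kernel bounded below as an increasing limit of Lipschitz kernels.
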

What has been as yet unresolved for integrable kernels on sets $A$ of positive $K$-capacity is whether, under the mild assumptions of symmetry and lower semi-continuity of $K$, every limit measure (in the weak$^*$ sense) of a sequence of normalized counting measures $\nu_{\omega_N}$ associated with optimal $N$-th $K$-polarization constants attains $T_K(A)$.
We remark that such a result does not necessarily hold for non-integrable kernels. Consider a two-point set $A=\{0,1\}$ and any kernel $K$ with $K(0,0)=K(1,1)=\infty$ and $K(0,1)=K(1,0)<\infty$. Then, for any $N\geqslant 2$, the measure $\nu_N:=(1/N) \delta_0 + ((N-1)/N) \delta_1$ attains $\PP_K(A, N)=\infty$. However, $\nu_N \stackrel{*}{\to} \delta_1$, which does not attain $T_K(A)=\infty$.

One case when such a result holds is for $K\in C(A\times A)$. Namely, the following is true, see \cite{Borodachov2015}, \cite{Farkas2008}, \cite{Farkas2006} and \cite{Farkas2006a}.
\begin{theorem}
Let $A\subset \R^p$ be a compact set and $K\in C(A\times A)$ be a symmetric function. A sequence $(\omega_N)_{N=1}^\infty$ of $N$-point multisets on $A$ satisfies
$$
\lim_{N\to \infty} P_K(\nu_{\omega_N}) = T_K(A)
$$
if and only if every weak$^*$-limit measure $\nu^*$ of the sequence $(\nu_{\omega_N})_{N=1}^\infty$ attains $T_K(A)$.
\end{theorem}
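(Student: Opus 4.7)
The whole theorem is essentially an expression of the fact that, for continuous kernels on a compact set, the polarization functional $\mu \mapsto P_K(\mu)$ is continuous in the weak$^*$ topology. So the proof plan is to first isolate this as a lemma and then deduce both implications by routine subsequence arguments.

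\textbf{Main lemma.} If $K \in C(A\times A)$ and $(\mu_N)$ is a sequence of probability measures on $A$ with $\mu_N \stackrel{*}{\to} \mu$, then $P_K(\mu_N) \to P_K(\mu)$. To prove this I would show the stronger statement that $U_K^{\mu_N} \to U_K^{\mu}$ uniformly on $A$, which immediately implies convergence of the infima. Uniform convergence follows from a standard equicontinuity argument: since $K$ is continuous on the compact set $A\times A$, it is uniformly continuous, so for each $\e > 0$ there exists $\delta > 0$ with $|K(x,y) - K(x,y')| < \e$ whenever $|y - y'| < \delta$. Cover $A$ with finitely many balls of radius $\delta$ around centers $y_1,\dots, y_m$. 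For any $y$ lying in the ball around some $y_i$,
\[
|U_K^{\mu_N}(y) - U_K^{\mu_N}(y_i)| \leqslant \int_A |K(x,y) - K(x,y_i)|\,\textup{d}\mu_N(x) \leqslant \e,
\]
and likewise for $\mu$. The middle term $|U_K^{\mu_N}(y_i) - U_K^{\mu}(y_i)|$ tends to $0$ by the weak$^*$ convergence applied to the continuous function $K(\cdot, y_i)$, and there are only finitely many $y_i$. Hence $\sup_{y\in A} |U_K^{\mu_N}(y) - U_K^{\mu}(y)| \to 0$.

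\textbf{Deducing the two implications.} Once the lemma is in hand, both directions are short contradiction arguments that use the weak$^*$ compactness of the space of probability measures on the compact set $A$.

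For the $(\Leftarrow)$ direction, assume every weak$^*$-limit measure $\nu^*$ of $(\nu_{\omega_N})$ satisfies $P_K(\nu^*) = T_K(A)$. If $P_K(\nu_{\omega_N})$ failed to converge to $T_K(A)$, there would exist $\e > 0$ and a subsequence $(N_k)$ with $|P_K(\nu_{\omega_{N_k}}) - T_K(A)| \geqslant \e$. By weak$^*$ compactness, pass to a further subsequence so that $\nu_{\omega_{N_k}} \stackrel{*}{\to} \nu^*$; the lemma gives $P_K(\nu_{\omega_{N_k}}) \to P_K(\nu^*) = T_K(A)$, a contradiction. For the $(\Rightarrow)$ direction, assume $P_K(\nu_{\omega_N}) \to T_K(A)$ and let $\nu^*$ be any weak$^*$-limit measure, say $\nu_{\omega_{N_k}} \stackrel{*}{\to} \nu^*$. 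The lemma gives $P_K(\nu^*) = \lim_k P_K(\nu_{\omega_{N_k}}) = T_K(A)$, as required.

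\textbf{Where the difficulty lies.} There is really no hidden obstacle; the only piece of actual work is the uniform convergence of potentials in the lemma, and this is the standard consequence of compactness of $A$ together with continuity of $K$. The symmetry hypothesis on $K$ and Ohtsuka's theorem are not needed for this argument; the content of the result is precisely the weak$^*$ continuity of $P_K$ in the continuous-kernel regime, which will contrast with the more delicate situation for the integrable but unbounded kernels treated later in the paper.
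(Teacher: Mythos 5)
Your proof is correct. Note that the paper itself does not prove this theorem; it is quoted with references to Borodachov--Hardin--Saff and to the papers of Farkas--Nagy et al., so there is no in-paper argument to compare against. Your route --- uniform convergence of $U_K^{\mu_N}$ to $U_K^{\mu}$ via uniform continuity of $K$ and a finite $\delta$-net, hence weak$^*$ continuity of $\mu\mapsto P_K(\mu)$ on probability measures, followed by the two subsequence arguments using weak$^*$ sequential compactness --- is the standard one and all steps check out (the equicontinuity bound uses that the $\nu_{\omega_N}$ have total mass $1$, which holds since they are normalized counting measures). You are also right that symmetry of $K$ plays no role here, and your closing remark correctly identifies the point of the paper: for kernels unbounded on the diagonal this weak$^*$ continuity of $P_K$ is exactly what fails to be automatic and is recovered in Theorem \ref{maintheorem} via the minimum principle.
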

Notice that this theorem does not cover cases when $K$ is unbounded along the diagonal of $A\times A$; in particular, Riesz kernels $K(x,y)=|x-y|^{-s}$ when $s>0$. The following theorem by B. Simanek applies to Riesz kernels (as well as more general kernels) but under rather special conditions on the set $A$ and the Riesz parameter.
\begin{theorem}[Simanek, \cite{Simanek2015}]\label{thsimanek}
Assume $A\subset \R^p$ is a compact set and $K\colon A\times A \to (-\infty, \infty]$ is a lower semi-continuous kernel bounded from below. Assume further there exists a unique probability measure $\mu_{eq}$ with $\supp(\mu_{eq})=A$ and $U^{\mu_{eq}}(y)\equiv C$ for every $y\in A$. Then $\mu_{eq}$ is the unique measure that attains $T_K(A)$. Furthermore, if $\nu_N$ is an $N$-point normalized counting measure that attains      $\PP_K(A, N)$, then  $\nu_N\stackrel{*}{\to} \mu_{eq}$ as $N\to \infty$.
 \end{theorem}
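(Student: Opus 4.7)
The plan is to split the argument into two stages: first, identify $T_K(A)=C$ with $\mu_{eq}$ as its unique maximizer (which does not involve the discrete problem at all); second, use Ohtsuka's theorem together with a Fubini-based squeezing argument to force weak$^*$ convergence, with the uniqueness of $\mu_{eq}$ closing the loop.

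For the first stage, the key identity is Fubini--Tonelli applied to the bounded-below symmetric kernel $K$: for any probability measure $\mu$ on $A$,
$$\int_A U_K^\mu(y)\,d\mu_{eq}(y)=\int_A U_K^{\mu_{eq}}(x)\,d\mu(x)=C,$$
so $P_K(\mu)\le C=P_K(\mu_{eq})$, giving $T_K(A)=C$ with $\mu_{eq}$ attaining. If $\mu$ also attains $C$, then $U_K^\mu\ge C$ on $A$ and, combined with the integral identity, $U_K^\mu=C$ holds $\mu_{eq}$-a.e. Lower semi-continuity of $U_K^\mu$ together with $\supp\mu_{eq}=A$ upgrades this to $U_K^\mu\equiv C$ on $A$: any point where $U_K^\mu>C$ would, by l.s.c., produce an open neighborhood on which $U_K^\mu>C$, and $\mu_{eq}$ charges every nonempty (relatively) open subset of $A$, contradicting the a.e.\ equality. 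The uniqueness hypothesis then forces $\mu=\mu_{eq}$.

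For the second stage, Ohtsuka's theorem gives $P_K(\nu_N)\to C$. Because probability measures on the compact set $A$ form a weak$^*$ sequentially compact set, it suffices to show that every subsequential weak$^*$ limit $\nu^*$ of $\{\nu_N\}$ equals $\mu_{eq}$. Fubini yields $\int U_K^{\nu_N}\,d\mu_{eq}=C$, while the pointwise bound $U_K^{\nu_N}(y)\ge P_K(\nu_N)$ on $A$ forces $\int(U_K^{\nu_N}-P_K(\nu_N))\,d\mu_{eq}=C-P_K(\nu_N)\to 0$. The nonnegative sequence $U_K^{\nu_N}-P_K(\nu_N)$ thus tends to zero in $L^1(\mu_{eq})$ and, along a sub-subsequence, $\mu_{eq}$-a.e. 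The Portmanteau inequality applied to the lower semi-continuous $K(\cdot,y)$ gives $U_K^{\nu^*}(y)\le\liminf_N U_K^{\nu_N}(y)$ pointwise in $y$, hence $U_K^{\nu^*}\le C$ $\mu_{eq}$-a.e.; the l.s.c./full-support trick from the first stage upgrades this to $U_K^{\nu^*}\le C$ on all of $A$. Fubini again gives $\int U_K^{\nu^*}\,d\mu_{eq}=C$, so $U_K^{\nu^*}=C$ $\mu_{eq}$-a.e.

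The main obstacle will be the closing step: passing from ``$U_K^{\nu^*}=C$ $\mu_{eq}$-a.e.''\ to ``$\nu^*=\mu_{eq}$''. Lower semi-continuity alone cannot promote a.e.\ equality to pointwise equality on $A$, since $U_K^{\nu^*}$ could in principle dip below $C$ on a $\mu_{eq}$-null set, and the uniqueness hypothesis is stated in pointwise terms. I plan to close the gap by an energy comparison: from $U_K^{\nu^*}\le C$ on $A\supseteq\supp\nu^*$ we obtain
$$I_K(\nu^*):=\int_A U_K^{\nu^*}\,d\nu^*\;\le\; C\;=\;\int_A U_K^{\mu_{eq}}\,d\mu_{eq}\;=\;I_K(\mu_{eq}),$$
and under the hypothesis of Simanek's theorem one expects $\mu_{eq}$ to be the unique minimum-energy probability measure on $A$ (this is standard whenever the equilibrium problem is well-posed and $K$ is strictly positive definite in the integral sense, which is implicit in the uniqueness of $\mu_{eq}$). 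A fallback is to note that the signed measure $\nu^*-\mu_{eq}$ has a nonpositive $K$-potential on $A$ with vanishing $\mu_{eq}$-integral, and to deduce $\nu^*=\mu_{eq}$ from the rigidity of this configuration.
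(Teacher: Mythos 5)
This theorem is quoted from Simanek's paper \cite{Simanek2015}; the present paper contains no proof of it, so there is no internal argument to compare yours against. Judged on its own terms, your first stage and most of your second stage are sound: the Fubini identity $\int U_K^\mu\,d\mu_{eq}=\int U_K^{\mu_{eq}}\,d\mu=C$ (which does require symmetry of $K$ --- worth noting, since the statement as quoted does not list it), the full-support/lower-semicontinuity device for upgrading $\mu_{eq}$-a.e.\ bounds to pointwise ones, and the Ohtsuka-plus-$L^1(\mu_{eq})$ squeeze yielding $U_K^{\nu^*}\le C$ on $A$ with equality $\mu_{eq}$-a.e.\ are all correct.

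The gap is exactly where you flagged it, and neither proposed patch closes it. The assertion that strict positive definiteness of $K$ ``is implicit in the uniqueness of $\mu_{eq}$'' is unjustified: uniqueness of a full-support probability measure with constant potential concerns a single fiber of the map $\mu\mapsto U_K^\mu$ and implies nothing about injectivity of that map on differences of measures, strict convexity of the energy, or $\mu_{eq}$ being an energy minimizer. The ``rigidity'' of the signed measure $\nu^*-\mu_{eq}$ having nonpositive potential with vanishing $\mu_{eq}$-integral is precisely an energy principle, which is not among the hypotheses (l.s.c., bounded below, plus the uniqueness assumption). The same defect already touches your first stage: you show any maximizer $\mu$ of $P_K$ satisfies $U_K^\mu\equiv C$, but the uniqueness hypothesis as stated applies only to measures with $\supp\mu=A$, so concluding $\mu=\mu_{eq}$ needs either a stronger reading of that hypothesis or the missing injectivity. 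A repair that stays within the hypotheses would be to pass to $\lambda=\tfrac12(\nu^*+\mu_{eq})$, which automatically has full support; but you then only know $U_K^\lambda\le C$ with equality $\mu_{eq}$-a.e., not $U_K^\lambda\equiv C$ everywhere, so the uniqueness hypothesis still does not apply. As written, the argument establishes everything except the identification $\nu^*=\mu_{eq}$, which is the heart of the theorem.
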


 We remark that if $A=\mathbb{B}^d$, the $d$-dimensional unit ball and $f(t)=t^{-s}$ with  $d-2\leq s<d$, then Theorem \ref{thsimanek} applies, while if $0<s<d-2$ or $f(t)=\log(2/t)$, then the assumptions of this theorem are not satisfied. However, it was shown by Erd\'elyi and Saff \cite{Erdelyi2013} that for this case the only $N$-point normalized counting measure $\nu_N$ that attains $\PP_f(\mathbb{B}^d, N)$ is $\nu_N=\delta_0$.\\

In this paper we obtain  a convergence theorem that holds for all integrable Riesz kernels provided the set $A$ is $d$-regular.

\begin{defin}
A compact set $A\subset \R^p$ is called {\it $d$-regular}, $0<d\leqslant p$, if there exist two positive constants $c$ and $C$ such that for any point $y\in A$ and any $r$ with $0<r<\diam(A)$, we have $cr^d\leqslant \H_d(B(y,r)\cap A)\leqslant Cr^d$, where $\H_d$ is the $d$-dimensional Hausdorff measure on $\R^p$ normalized by $\H_d([0,1]^d)=1$.
\end{defin}
Further, we introduce a	special family of kernels.
\begin{defin}
A function $f\colon (0,\infty)\to (0, \infty)$ is called {\it $d$-Riesz-like} if it is continuous, strictly decreasing, and for some $\ep$ with $0<\ep<d$ and $t_\ep>0$ the function $t\mapsto t^{d-\ep}f(t)$ is increasing on $[0, t_\ep]$; the value at zero is formally defined by
$$
\lim_{t\to 0^+} t^{d-\ep}f(t).
$$
The kernel $K$ is called {$d$-Riesz-like} if $K(x,y)=f(|x-y|)$.
\end{defin}
\begin{zamech*}
Examples of such functions $f$ include $s$-Riesz potentials $f(t)=t^{-s}$ for $0<s<d$, as well as $f(t)=\log (c/t)$, where the constant $c$ is chosen so that $\log(c/|x-y|) >0$ for any $x,y \in A$. Further, we can consider $f(t):=t^{-s} \cdot (\log(c/t))^\alpha$ for any $\alpha>0$ and $0<s<d$. We also do not exclude the case when $f$ is bounded; e.g., $f(t)=e^{-ct^2}$, $c>0$.
\end{zamech*}
Under above assumptions on $A$ and $f$, we first study the behavior of $P_K(\mu_N)$ as $\mu_N\stackrel{*}{\to}\mu$. In what follows, when $K(x,y)=f(|x-y|)$ we write $U_f$, $P_f$ and $T_f(A)$ instead of $U_K$, $P_K$ and $T_K(A)$. We prove the following.
\begin{theorem}\label{maintheorem}
Let $A$ be a $d$-regular compact set, and $f$ be a $d$-Riesz-like function. If $(\nu_N)_{N=1}^\infty$ is a sequence of measures on $A$ with $\nu_N\stackrel{*}{\to} \nu$, then $P_f(\nu_N)\to P_f(\nu)$ as $N\to \infty$.
\end{theorem}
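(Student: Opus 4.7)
The plan is to prove $\liminf_N P_f(\nu_N)\ge P_f(\nu)$ and $\limsup_N P_f(\nu_N)\le P_f(\nu)$ separately; only the second will use the $d$-regular and $d$-Riesz-like structure. For the lower bound I would apply a principle of descent: pick $y_N\in A$ attaining the infimum defining $P_f(\nu_N)$ (possible since $U_f^{\nu_N}$ is l.s.c.\ on the compact set $A$), and along a subsequence realizing $\liminf_N P_f(\nu_N)$ pass to a further subsequence with $y_N\to y^*\in A$. Approximating the bounded-below l.s.c.\ kernel $K(x,y)=f(|x-y|)$ from below by a pointwise-increasing sequence $K_m\in C(A\times A)$, uniform continuity of each $K_m$ together with $\nu_N\stackrel{*}{\to}\nu$ gives $\int K_m(x,y_N)\,d\nu_N(x)\to\int K_m(x,y^*)\,d\nu(x)$, and monotone convergence in $m$ yields $\liminf_N U_f^{\nu_N}(y_N)\ge U_f^\nu(y^*)\ge P_f(\nu)$.

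The core tool for the upper bound is a minimum principle of Lebesgue-differentiation type: \emph{for every finite positive Borel measure $\mu$ on $A$ and every $y_0\in A$ with $U_f^\mu(y_0)<\infty$,}
$$
\lim_{r\to 0^+}\frac{1}{\H_d(B(y_0,r)\cap A)}\int_{B(y_0,r)\cap A} U_f^\mu(y)\,d\H_d(y)=U_f^\mu(y_0).
$$
By Fubini the average equals $\int_A T_r(x)\,d\mu(x)$, where $T_r$ is the $f$-potential of the normalized Hausdorff measure on $B(y_0,r)\cap A$. For $|x-y_0|>2r$ one has $T_r(x)\to f(|x-y_0|)$ pointwise, dominated by $Cf(|x-y_0|)\in L^1(\mu)$ via the doubling bound $f(t/2)\le 2^{d-\ep}f(t)$ coming from monotonicity of $t^{d-\ep}f(t)$; dominated convergence handles this piece. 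The near piece $\int_{B(y_0,2r)}T_r\,d\mu$ is the crux: splitting the defining $y$-integral for $T_r(x)$ according to $|x-y|\le r$ versus $|x-y|>r$ and using both $f(t)\le(r/t)^{d-\ep}f(r)$ for $t\le r$ and the $d$-regularity of $A$, I obtain the sharp pointwise bound $T_r(x)\le Cf(r)$ on $B(y_0,2r)$. Hence $\int_{B(y_0,2r)}T_r\,d\mu\le Cf(r)\mu(B(y_0,2r))$, and a layer-cake identity against $r\mapsto\mu(B(y_0,r)\cap A)$ shows that $f(r)\mu(B(y_0,r))\to 0$ whenever $U_f^\mu(y_0)<\infty$.

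Granted the minimum principle, the upper bound is short. The same Riesz-like and $d$-regular estimate gives $\int_A\int_A f(|x-y|)\,d\H_d(y)\,d\nu(x)<\infty$, so $U_f^\nu\in L^1(A,\H_d)$ is finite $\H_d$-a.e., and in particular $P_f(\nu)<\infty$. Given $\eta>0$, I would choose $y_0\in A$ with $U_f^\nu(y_0)<P_f(\nu)+\eta/2$ and, by the minimum principle, $r>0$ so small that the average of $U_f^\nu$ over $B(y_0,r)\cap A$ does not exceed $P_f(\nu)+\eta$. A separate dominated-convergence argument (again exploiting the Riesz-like bound to produce a uniform local integrable majorant) shows $T_r\in C(A)$, so $\int T_r\,d\nu_N\to\int T_r\,d\nu$ by weak-star convergence. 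The elementary bound $P_f(\nu_N)\le\int T_r\,d\nu_N$ now gives $\limsup_N P_f(\nu_N)\le P_f(\nu)+\eta$, and $\eta\to 0$ completes the proof.

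The main obstacle is the minimum principle, and within it the sharp near-piece bound $T_r(x)\le Cf(r)$ on $B(y_0,2r)$: the naive estimate $T_r(x)\le Cr^{\ep-d}$ obtained directly from $f(t)\le Ct^{-(d-\ep)}$ is strictly weaker than $Cf(r)$ under the Riesz-like hypothesis and too weak to absorb the measure-theoretic smallness $\mu(B(y_0,r))=o(1/f(r))$ forced by $U_f^\mu(y_0)<\infty$. Overcoming this by exploiting both sides of the scaling of $t^{d-\ep}f(t)$ in tandem with the $d$-regularity of $A$ is exactly where the two structural hypotheses of the theorem enter in an essential way.
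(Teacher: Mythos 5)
Your argument is correct, and for the difficult inequality $\limsup_N P_f(\nu_N)\leqslant P_f(\nu)$ it takes a genuinely different route from the paper. The paper factors that half through two pieces of potential-theoretic machinery: the lower envelope theorem (Theorem \ref{thenvelope}), which gives $\liminf_N U_f^{\nu_N}(y)=U_f^{\nu}(y)$ for all $y$ outside an $f$-negligible set, and the minimum principle (Theorem \ref{minimumpr}), which upgrades the resulting bound $U_f^{\nu}\geqslant\liminf_N P_f(\nu_N)$ from $A\setminus E$ to all of $A$. You bypass both the lower envelope theorem and the entire notion of $K$-negligible sets: you pair the weak$^*$ convergence directly with the single continuous test function $T_r$ (the potential of normalized $\H_d$ on $B(y_0,r)\cap A$), using the elementary inequality $P_f(\nu_N)\leqslant\int T_r\,d\nu_N$ together with the Lebesgue-differentiation statement at a near-minimizing point $y_0$ of $U_f^{\nu}$. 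The technical core is shared: your bounds $T_r(x)\leqslant Cf(|x-y_0|)$ for $|x-y_0|>2r$ and $T_r(x)\leqslant Cf(r)$ on $B(y_0,2r)$ are exactly the content of Lemma \ref{sublemma} (organized slightly differently), and your differentiation statement is Proposition \ref{convolv} restricted to points of finite potential, which suffices since $\int_A U_f^{\nu}\,d\H_d=\int_A U_f^{\H_d}\,d\nu<\infty$ guarantees such points exist. What your packaging buys is self-containment -- beyond the principle of descent (which you also reprove by monotone approximation of the l.s.c.\ kernel) no external results are imported; what it gives up is the standalone minimum principle, Theorem \ref{minimumpr}, which the paper presents as a result of independent interest and which your route never establishes.

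One minor technical remark: the continuity of $T_r$ on $A$, which your argument genuinely needs and the paper's does not, requires slightly more care than a dominated-convergence argument with a single majorant, since $\sup_n f(|x_n-y|)$ need not be integrable near the limit point. The clean fix is to note that the truncated potentials $x\mapsto\int\min\{f(|x-y|),m\}\,d\H_d(y)$ are continuous and converge uniformly to $T_r$, because $\sup_{x}\int_{B(x,\delta)\cap A}f(|x-y|)\,d\H_d(y)\to0$ as $\delta\to0^{+}$ -- an estimate that follows from the same computation as \eqref{inequalityeight}. This is a presentational repair, not a gap.
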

This theorem is a direct consequence of a minimum principle for potentials, introduced below in Theorem \ref{minimumpr}. From Theorem \ref{maintheorem} we derive the following result.
\begin{theorem}\label{weaklimitpolar}
Let $A$ and $f$ satisfy the conditions of Theorem \ref{maintheorem}. For each $N$ let $\nu_N$ be an $N$-point normalized counting measure that attains $\PP_f(A, N)$.
If $\nu^*$ is any weak$^*$-limit measure of the sequence $(\nu_N)$, then $\nu^*$ solves the continuous $f$-polarization problem.
\end{theorem}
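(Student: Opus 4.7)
The plan is to combine Ohtsuka's theorem with the continuity of polarization established in Theorem \ref{maintheorem}. By hypothesis, for each $N$ the measure $\nu_N$ is an $N$-point normalized counting measure attaining the discrete polarization, so $P_f(\nu_N)=\PP_f(A,N)$. By Ohtsuka's theorem (applied to $K(x,y)=f(|x-y|)$, which is symmetric, lower semi-continuous, and bounded from below since $f>0$), we have $\PP_f(A,N)\to T_f(A)$ as $N\to\infty$.

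Now let $\nu^*$ be any weak$^*$-limit measure of $(\nu_N)$, so there is a subsequence $(\nu_{N_k})$ with $\nu_{N_k}\stackrel{*}{\to}\nu^*$. Testing against the constant function $1$ shows that $\nu^*$ is a probability measure supported on $A$. By Theorem \ref{maintheorem},
$$
P_f(\nu_{N_k})\longrightarrow P_f(\nu^*).
$$
On the other hand, $P_f(\nu_{N_k})=\PP_f(A,N_k)\to T_f(A)$ along the same subsequence. Consequently $P_f(\nu^*)=T_f(A)$. Since $T_f(A)=\sup_\mu P_f(\mu)$ taken over all probability measures $\mu$ on $A$, and $\nu^*$ is itself a probability measure on $A$, we conclude that $\nu^*$ attains the supremum, i.e., $\nu^*$ solves the continuous $f$-polarization problem.

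In this argument, there is essentially no obstacle beyond ensuring the applicability of the cited results: Theorem \ref{maintheorem} requires only weak$^*$ convergence of Borel measures on $A$ (which we have for the subsequence), and Ohtsuka's theorem requires only symmetry, lower semi-continuity and boundedness from below of the kernel, all of which hold for $d$-Riesz-like kernels. The only subtle point worth recording is that $\nu^*$ is automatically a probability measure (not merely a positive measure of mass at most $1$) because $A$ is compact and the constant $1$ is a continuous test function. All the real work has already been absorbed into Theorem \ref{maintheorem}, which is where the $d$-regularity of $A$ and the $d$-Riesz-like structure of $f$ are used through the minimum principle.
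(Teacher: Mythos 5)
Your proposal is correct and follows essentially the same route as the paper: extract the weak$^*$-convergent subsequence, apply Ohtsuka's theorem to get $\PP_f(A,N)\to T_f(A)$, apply Theorem \ref{maintheorem} to get $P_f(\nu_{N_k})\to P_f(\nu^*)$, and equate the two limits. Your added remark that $\nu^*$ is automatically a probability measure is a small but harmless supplement to the paper's argument.
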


Notice that whenever there is a unique measure $\nu$ that solves the continuous polarization problem on $A$,  then Theorem \ref{weaklimitpolar} implies that the whole sequence $\{\nu_N\}$ converges to $\nu$ in the weak$^*$ sense.

\section{A minimum principle for Riesz-like potentials}
We begin this section with some known results from potential theory.
In what follows, all measures will have support on $A$. We proceed with the following definition, important in potential theory.

\begin{defin}
A set $E\subset A$ is called {\it $K$-negligible} if for any compact set $E_1\subset E$ and any measure $\mu$ such that $U_K^\mu$ is bounded on $E_1$, we have $\mu(E_1)=0$.

\end{defin}

The following definition describes a useful class of kernels.
\begin{defin}
The kernel $K$ is said to be {\it regular} if for any positive Borel measure $\mu$ the following is satisfied: if the potential $U_K^\mu$ is finite and continuous on $\supp\mu$, then it is finite and continuous in the whole space $\R^p$.
\end{defin}
It is known, see \cite{Fuglede1960}, that a kernel of the form $K(x,y)=f(|x-y|)$, where $f$ is a continuous non-negative strictly decreasing function, is regular.
Regularity of a kernel implies the following two results.
\begin{theorem}[Principle of descent, Lemma 2.2.1 in \cite{Fuglede1960}]\label{thdescent}
Assume $K$ is regular. If $\mu_n \stackrel{*}{\to} \mu$ and $y_n\to y_\infty$ as $n\to \infty$, then
$$
\liminf_n U_K^{\mu_n}(y_n) \geqslant U_K^\mu(y_\infty).
$$
\end{theorem}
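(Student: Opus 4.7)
The plan is to combine the lower semi-continuity of $K$ with the weak$^*$ convergence $\mu_n \stackrel{*}{\to} \mu$ via a standard monotone-approximation argument. Since $A$ is compact and $K\colon A\times A\to (-\infty,\infty]$ is lower semi-continuous (this is a built-in feature of $d$-Riesz-like kernels), I would first invoke the standard fact that on a metric space every lower semi-continuous function that is bounded below is the pointwise supremum of an increasing sequence of continuous functions. Thus I pick continuous $\varphi_k\colon A\times A\to \R$ with $\varphi_k\uparrow K$ on $A\times A$.

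For each fixed $k$, the inequality $K\geqslant \varphi_k$ gives
\begin{equation*}
U_K^{\mu_n}(y_n)=\int_A K(x,y_n)\,\textup{d}\mu_n(x)\geqslant \int_A \varphi_k(x,y_n)\,\textup{d}\mu_n(x).
\end{equation*}
The next step is to show that the right-hand side converges to $\int_A \varphi_k(x,y_\infty)\,\textup{d}\mu(x)$ as $n\to\infty$. I would split
\begin{equation*}
\int \varphi_k(x,y_n)\,\textup{d}\mu_n-\int \varphi_k(x,y_\infty)\,\textup{d}\mu=\int\bigl(\varphi_k(x,y_n)-\varphi_k(x,y_\infty)\bigr)\textup{d}\mu_n+\int \varphi_k(x,y_\infty)\,\textup{d}(\mu_n-\mu).
\end{equation*}
The second term vanishes in the limit because $x\mapsto \varphi_k(x,y_\infty)$ is continuous on $A$ and $\mu_n\stackrel{*}{\to}\mu$. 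The first term vanishes because $\varphi_k$ is uniformly continuous on the compact set $A\times A$, so $|\varphi_k(x,y_n)-\varphi_k(x,y_\infty)|$ tends to $0$ uniformly in $x$, while the masses $\mu_n(A)$ are uniformly bounded (again by weak$^*$ convergence applied to $\varphi\equiv 1$).

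Taking $\liminf_n$ on both sides of the inequality displayed above therefore yields
\begin{equation*}
\liminf_n U_K^{\mu_n}(y_n)\geqslant \int_A \varphi_k(x,y_\infty)\,\textup{d}\mu(x)
\end{equation*}
for every $k$. Finally I let $k\to \infty$ and invoke monotone convergence, using $\varphi_k\uparrow K$, to obtain $\int \varphi_k(x,y_\infty)\,\textup{d}\mu\uparrow U_K^\mu(y_\infty)$, which completes the argument.

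The main technical obstacle is the joint passage to the limit in both $\mu_n$ and $y_n$: weak$^*$ convergence only tests against continuous functions of $x$, and the evaluation point $y_n$ is also moving, so a single application of weak$^*$ convergence or of the Portmanteau-type statement for LSC functions is not enough. Approximating $K$ from below by the continuous $\varphi_k$ decouples these issues, with uniform continuity on the compact product $A\times A$ handling the motion of $y_n$ and weak$^*$ convergence handling the measures. Note that in this argument only lower semi-continuity of $K$ is actually used; the hypothesis that $K$ is regular becomes essential only in the companion result that $U_K^\mu$ is continuous when its restriction to $\supp\mu$ is continuous.
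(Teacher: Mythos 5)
Your argument is correct, but note that the paper does not prove this statement at all: it is imported verbatim as Lemma 2.2.1 of Fuglede's work, so there is no internal proof to compare against. What you have written is the classical, self-contained proof of the principle of descent, and each step checks out: the monotone approximation $\varphi_k\uparrow K$ by continuous functions is available because $K$ is lower semi-continuous and bounded below on the compact set $A\times A$ (which holds for the kernels in this paper, since $f$ is positive and continuous on $(0,\infty)$ with a possibly infinite limit at $0$); the splitting of $\int\varphi_k(x,y_n)\,\textup{d}\mu_n-\int\varphi_k(x,y_\infty)\,\textup{d}\mu$ correctly isolates the motion of $y_n$ (handled by uniform continuity of $\varphi_k$ and the uniform bound on $\mu_n(A)$ coming from testing weak$^*$ convergence against $\varphi\equiv 1$) from the motion of $\mu_n$ (handled by weak$^*$ convergence against the continuous function $x\mapsto\varphi_k(x,y_\infty)$); and the final passage $k\to\infty$ is a legitimate application of monotone convergence since $\varphi_1$ is bounded. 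Your closing observation is also accurate and worth retaining: this proof uses only lower semi-continuity and boundedness below, not regularity of the kernel, so the descent principle is actually more robust than the companion lower-envelope theorem, which is where regularity genuinely enters.
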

\begin{theorem}[Lower envelope, Theorem 3 in \cite{Brelot1960}]\label{thenvelope}
Assume $K$ is regular. If $\mu_n \stackrel{*}{\to} \mu$, then the set
$$
E:=\{y\in A\colon \liminf_n U_K^{\mu_n}(y) > U_K^{\mu}(y)\},
$$
is $K$-negligible.
\end{theorem}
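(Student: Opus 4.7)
The plan is to combine the principle of descent (Theorem \ref{thdescent}) in one direction with a Fubini-plus-Fatou argument in the other. Applying Theorem \ref{thdescent} with the constant sequence $y_n=y$ yields
$$
\liminf_n U_K^{\mu_n}(y) \geqslant U_K^\mu(y), \quad y \in A,
$$
so $E$ is exactly the set where strict inequality occurs. To show $E$ is $K$-negligible, fix a compact $E_1 \subset E$ and a measure $\sigma$ with $U_K^\sigma$ bounded on $E_1$; set $\sigma_1 := \sigma|_{E_1}$, so that $\supp \sigma_1 \subset E_1$ and $U_K^{\sigma_1} \leqslant U_K^\sigma$ is bounded on $\supp \sigma_1$ (assuming, as is standard, that we have normalized $K$ to be nonnegative by adding a constant).

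Assume for the moment that $U_K^{\sigma_1}$ is continuous on $\supp \sigma_1$; regularity of $K$ then extends continuity to all of $\R^p$. Symmetry of $K$ together with Fubini/Tonelli gives $\int U_K^{\mu_n}\,d\sigma_1 = \int U_K^{\sigma_1}\,d\mu_n$, and weak$^*$ convergence with the continuous test function $U_K^{\sigma_1}$ delivers
$$
\int U_K^{\sigma_1}\,d\mu_n \to \int U_K^{\sigma_1}\,d\mu = \int U_K^\mu\,d\sigma_1.
$$
Fatou's lemma then produces the reverse inequality at the integrated level:
$$
\int \liminf_n U_K^{\mu_n}\,d\sigma_1 \leqslant \liminf_n \int U_K^{\mu_n}\,d\sigma_1 = \int U_K^\mu\,d\sigma_1.
$$
Combining this with the pointwise bound $\liminf_n U_K^{\mu_n} \geqslant U_K^\mu$ from the principle of descent forces the two integrands to agree $\sigma_1$-almost everywhere. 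Since $\sigma_1$ is concentrated on $E_1\subset E$, where strict inequality holds pointwise by the definition of $E$, this is possible only if $\sigma_1(E_1) = \sigma(E_1) = 0$, as required.

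The main obstacle is the reduction to the case of a continuous potential: the definition of $K$-negligibility only supplies boundedness of $U_K^\sigma$ on $E_1$, whereas regularity of $K$ demands continuity on the support of the test measure. The standard remedy is a Lusin-type approximation from potential theory: exhaust $\sigma_1$ by an increasing sequence of restrictions $\sigma_1^{(k)} := \sigma_1|_{F_k}$, where $F_k\subset E_1$ are closed sets with $\sigma_1(E_1\setminus F_k)\to 0$ chosen so that $U_K^{\sigma_1}$ is continuous on each $F_k$ (which is possible because $U_K^{\sigma_1}$ is a bounded Borel function on $E_1$). Each $\sigma_1^{(k)}$ has a potential that is continuous on its support, so the Fubini-and-Fatou argument above yields $\sigma_1^{(k)}(E_1) = 0$; passing to the limit $k\to\infty$ gives $\sigma_1(E_1) = 0$. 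Once this technical reduction is in hand, the theorem follows immediately from the two integral estimates exhibited above.
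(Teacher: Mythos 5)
The paper does not actually prove this statement; it is imported from the literature (Theorem 3 of Brelot's notes, with Fuglede's regularity framework), so there is no internal proof to compare against. Your argument is the classical proof of the lower envelope theorem and is essentially correct: descent gives $\liminf_n U_K^{\mu_n}\geqslant U_K^\mu$ everywhere, and the Fubini--Fatou computation against a measure with continuous potential concentrated on a compact subset of $E$ forces that measure to vanish. Two points should be made explicit. First, Lusin's theorem only gives that the restriction of the \emph{function} $U_K^{\sigma_1}$ to $F_k$ is continuous; to conclude that the potential of the restricted \emph{measure} $\sigma_1^{(k)}=\sigma_1|_{F_k}$ is continuous on $F_k$ (so that regularity applies), write $U_K^{\sigma_1^{(k)}}=U_K^{\sigma_1}-U_K^{\sigma_1-\sigma_1^{(k)}}$ on $F_k$: the left-hand side is lower semi-continuous everywhere, and on $F_k$ it is the difference of a continuous function and a finite lower semi-continuous one, hence upper semi-continuous there; the two together give continuity on $F_k\supset\supp\sigma_1^{(k)}$. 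Second, deducing $\sigma_1^{(k)}$-a.e.\ equality of the integrands from $\int\liminf_n U_K^{\mu_n}\,\textup{d}\sigma_1^{(k)}\leqslant\int U_K^\mu\,\textup{d}\sigma_1^{(k)}$ requires $\int U_K^\mu\,\textup{d}\sigma_1^{(k)}<\infty$; this does hold, by Fubini and the boundedness of the continuous function $U_K^{\sigma_1^{(k)}}$ on the compact set $A$, but it must be noted before subtracting. With these two remarks supplied the proof is complete; in the paper's setting $K=f(|x-y|)$ is already positive and symmetric, so your normalization caveat is not needed.
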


The new minimum principle mentioned in the title is the following.
\begin{theorem}\label{minimumpr}
Let $A$ be a $d$-regular set, and $f$ be a $d$-Riesz-like function on $(0, \infty)$. If for a measure $\mu$ on $A$ and a constant $M$,
\begin{equation}\label{biggeroutsidenegl}
U_f^\mu(y)\geqslant M, \;\; y\in A\setminus E,
\end{equation}
where $E$ is $f$-negligible, then $U_f^{\mu}(y)\geqslant M$ for every $y\in A$.
\end{theorem}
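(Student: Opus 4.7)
The plan is to first show that $\mu$ places no mass on the ``bad'' set $\{y\in A:U_f^\mu(y)<M\}$, and then to rule out any point $y_0\in A$ with $U_f^\mu(y_0)<M$ by combining continuity of the potential off $\supp\mu$ with the $d$-regularity of $A$.

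For each integer $k\ge 1$, set $F_k:=\{y\in A:U_f^\mu(y)\le M-1/k\}$. Lower semi-continuity of $U_f^\mu$ makes $F_k$ a closed, hence compact, subset of $A$, and the assumption $U_f^\mu\ge M$ on $A\setminus E$ forces $F_k\subset E$. The restricted measure $\mu|_{F_k}$ satisfies $U_f^{\mu|_{F_k}}(y)\le U_f^\mu(y)\le M-1/k$ for $y\in F_k$, so its potential is bounded on $F_k$; applying the definition of $f$-negligibility to the compact set $F_k\subset E$ and the measure $\mu|_{F_k}$ gives $\mu(F_k)=\mu|_{F_k}(F_k)=0$. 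Taking a countable union yields $\mu(\{U_f^\mu<M\})=0$, and since $\{U_f^\mu\ge M\}$ is closed by lower semi-continuity, $\supp\mu\subset\{U_f^\mu\ge M\}$.

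Suppose, for contradiction, that some $y_0\in A$ has $U_f^\mu(y_0)<M$. Then $y_0\notin\supp\mu$, so $\gamma:=\dist(y_0,\supp\mu)>0$; for $y\in B(y_0,\gamma/2)$ and $x\in\supp\mu$ we have $f(|x-y|)\le f(\gamma/2)$, and dominated convergence shows that $U_f^\mu$ is continuous on $B(y_0,\gamma/2)$. Continuity combined with $U_f^\mu(y_0)<M$ produces some $r>0$ with $U_f^\mu(y)<M$ for every $y\in B(y_0,r)\cap A$, hence $B(y_0,r)\cap A\subset E$. The $d$-regularity of $A$ gives $\H_d(B(y_0,r)\cap A)\ge cr^d>0$. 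On the other hand, a dyadic annulus estimate that combines $d$-regularity with the bound $f(t)\le Ct^{-(d-\e)}$ for small $t$---a consequence of $t^{d-\e}f(t)$ being increasing on $[0,t_\e]$---shows that $U_f^{\H_d|_A}$ is bounded on $A$. The $f$-negligibility of $E$ then forces $\H_d(K)=0$ for every compact $K\subset E$; writing the open-in-$A$ set $B(y_0,r)\cap A$ as a countable union of compact subsets of $E$ gives $\H_d(B(y_0,r)\cap A)=0$, contradicting the lower bound.

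The main obstacle is verifying that $U_f^{\H_d|_A}$ is bounded on $A$, since this is where the $d$-Riesz-like growth of $f$ and the $d$-regularity of $A$ must fit together; it is this boundedness that forces $\H_d|_A(E)=0$ and, via $d$-regularity, precludes any nonempty open-in-$A$ subset from being contained in $E$. Once that is established, the rest of the argument is formal manipulation of the level sets of the lower semi-continuous function $U_f^\mu$.
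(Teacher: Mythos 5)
Your proof is correct, but it takes a genuinely different route from the paper's. The two arguments share one ingredient: the boundedness of $U_f^{\H_d}$ on $A$, which the paper establishes in \eqref{kvakskoaks} via the Case~2 estimate \eqref{inequalityeight} of Lemma \ref{sublemma}, and which your dyadic-annulus sketch (using $f(t)\leqslant t_\ep^{d-\ep}f(t_\ep)\,t^{-(d-\ep)}$ on $[0,t_\ep]$ together with $\H_d(A\cap B(y,r))\leqslant Cr^d$) proves in essentially the same way; from this both proofs conclude that every compact subset of the $f$-negligible set $E$ is $\H_d$-null. After that the paper invokes Proposition \ref{convolv}: every point of $A$ is a weak $d$-Lebesgue point of $U_f^\mu$, so the inequality $U_f^\mu\geqslant M$, valid $\H_d$-a.e., propagates to every point by averaging over $A\cap B(y,r)$. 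You instead first show that $\mu$ does not charge $\{U_f^\mu<M\}$ --- the restriction $\mu|_{F_k}$ to the level set $F_k=\{U_f^\mu\leqslant M-1/k\}\subset E$ has potential bounded on $F_k$ by construction, so negligibility applies to it directly --- and then note that a hypothetical bad point $y_0$ lies off $\supp\mu$, where $U_f^\mu$ is continuous, which would force a relatively open set $B(y_0,r)\cap A\subset E$; $d$-regularity makes that set $\H_d$-positive, contradicting the nullity of its compact exhaustion. Your route bypasses Proposition \ref{convolv} and the Case~1 analysis of Lemma \ref{sublemma} altogether, so it is more self-contained for this one theorem; the trade-off is that the paper's Lebesgue-point machinery is a stronger reusable statement (it reappears in the closing remark on unions of $d_k$-regular pieces) and yields the cleaner general principle that an $\H_d$-a.e. lower bound on a Riesz-like potential holds everywhere. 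All the steps you describe check out against the paper's definitions of $f$-negligibility and $d$-regularity, so there is no gap.
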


We proceed with a proposition that is an analog of the Lebesgue differentiation theorem for potentials.
\begin{defin}
For a function $\vf\colon A\to \R$ and a point $y\in A$, we call $y$ a {\it weak $d$-Lebesgue point} of $\vf$ if
$$
\vf(y)=\lim_{r\to 0^+} \frac{1}{\H_d(A\cap B(y,r))} \ili_{A\cap B(y,r)} \vf(z)\textup{d}\H_d(z),
$$
where $B(y,r)$ denotes the open ball in $\R^p$ with center at $y$ and radius $r$.
\end{defin}
\begin{prop}\label{convolv}
Suppose $A$ and $f$ satisfy conditions of Theorem \ref{minimumpr}, and $\mu$ is a measure supported on $A$. Then every point $y\in A$ is a weak $d$-Lebesgue point of $U_f^{\mu}$.
\end{prop}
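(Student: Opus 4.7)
The plan is to prove that both $\liminf$ and $\limsup$ of the averages equal $U_f^\mu(y)$. The $\liminf$ direction follows directly from lower semi-continuity of $U_f^\mu$: for any $M<U_f^\mu(y)$ there exists $\delta>0$ with $U_f^\mu(z)>M$ for every $z\in A\cap B(y,\delta)$, so the average over $A\cap B(y,r)$ exceeds $M$ whenever $r<\delta$. This also fully settles the case $U_f^\mu(y)=\infty$, so in what follows I assume $U_f^\mu(y)<\infty$; note that this assumption forces $\mu(\{y\})=0$ whenever $f(0^+)=\infty$.

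By Tonelli's theorem I would rewrite the average as
$$\frac{1}{\H_d(A\cap B(y,r))}\int_{A\cap B(y,r)} U_f^\mu(z)\,d\H_d(z)=\int_A g_r(x)\,d\mu(x),$$
where $g_r(x)$ denotes the $\H_d$-average of $z\mapsto f(|x-z|)$ over $A\cap B(y,r)$, and then split the $\mu$-integral into a far piece over $A\setminus B(y,2r)$ and a close piece over $B(y,2r)$, analyzing each separately.

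For the far piece, if $|x-y|\ge 2r$ and $z\in B(y,r)$ then $|x-z|\ge |x-y|/2$, so $g_r(x)\le f(|x-y|/2)$. The $d$-Riesz-like hypothesis (monotonicity of $t^{d-\varepsilon}f(t)$ near $0$) yields a uniform doubling estimate $f(t/2)\le Cf(t)$ on $(0,\diam(A)]$, whence $g_r(x)\le Cf(|x-y|)$, a $\mu$-integrable majorant. Continuity of $f$ on $(0,\infty)$ gives the pointwise limit $g_r(x)\to f(|x-y|)$ for every $x\ne y$, and dominated convergence then produces the desired far-piece limit $U_f^\mu(y)$.

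The close piece will be the main obstacle, since the pointwise bound that I obtain there necessarily blows up. For $|x-y|<2r$ one has $B(y,r)\subset B(x,3r)$, and a dyadic annular decomposition combining $d$-regularity of $A$ at the point $x\in A$ with the Riesz-like inequality $f(2^{-k}R)\le 2^{k(d-\varepsilon)}f(R)$ would give
$$\int_{A\cap B(x,R)} f(|x-z|)\,d\H_d(z)\le C_1 R^d f(R)$$
for all sufficiently small $R$; dividing by $\H_d(A\cap B(y,r))\ge cr^d$ then yields $g_r(x)\le C_2 f(r)$ on $B(y,2r)$. The key rescue is the vanishing estimate $f(s)\mu(B(y,s))\to 0$ as $s\to 0^+$, which follows from
$$f(s)\mu(B(y,s))\le \int_{B(y,s)} f(|x-y|)\,d\mu(x)\to 0$$
by absolute continuity of the finite integral $U_f^\mu(y)$. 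Using once more the Riesz-like bound $f(r)/f(2r)\le 2^{d-\varepsilon}$, I then conclude $\int_{B(y,2r)} g_r\,d\mu \le C_2 f(r)\mu(B(y,2r))\to 0$, which combined with the far piece completes the proof.
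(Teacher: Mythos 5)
Your argument is correct in the main case ($f(0^+)=\infty$) and reaches the conclusion by a different organization than the paper's, though it ultimately rests on the same two estimates. The paper first proves a single uniform lemma: $\frac{1}{\H_d(A\cap B(y,r))}\,\frac{1}{f(|x-y|)}\int_{A\cap B(y,r)}f(|x-z|)\,\textup{d}\H_d(z)\le C_0$ for \emph{every} $x\in A$ --- your far-region doubling bound $g_r(x)\le Cf(|x-y|)$ and your near-region bound $\int_{A\cap B(x,R)}f(|x-z|)\,\textup{d}\H_d(z)\le C_1R^df(R)$ are precisely its two cases, proved there by a layer-cake computation rather than dyadic annuli --- and then takes the limit by splitting the \emph{measure}, $\mu=\mu'+\mu_c$ with $\mu'=\mathbbm{1}_{B(y,\delta)}\mu$ chosen so that $U_f^{\mu'}(y)<\eta$, exploiting continuity of $U_f^{\mu_c}$ at $y\notin\supp\mu_c$ and bounding the $\mu'$-contribution by $C_0\eta$ via the uniform lemma. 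You instead split the \emph{spatial} integral at scale $2r$, run dominated convergence on the far piece, and kill the near piece with the vanishing estimate $f(r)\mu(B(y,2r))\to 0$; this gives a self-contained limit computation with no $\eta$-decomposition, and your lower-semicontinuity argument for the liminf (which also settles the case $U_f^\mu(y)=\infty$) is cleaner than routing everything through the decomposition.

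One step must be made explicit. The vanishing estimate $f(s)\mu(B(y,s))\le\int_{B(y,s)}f(|x-y|)\,\textup{d}\mu(x)\to0$ holds only when $f(0^+)=\infty$, since only then does $U_f^\mu(y)<\infty$ force $\mu(\{y\})=0$; if $f$ is bounded and $\mu(\{y\})>0$, the right-hand side tends to $f(0^+)\mu(\{y\})>0$, your near-piece bound does not close, and the far piece fails to recover the atom's contribution. You gesture at this with your parenthetical, but you should dispatch the case $f(0^+)<\infty$ separately at the outset --- as the paper does in one sentence: then $U_f^\mu$ is continuous on $\R^p$ and every point is trivially a weak $d$-Lebesgue point --- and only afterwards assume $f(0^+)=\infty$ and $U_f^\mu(y)<\infty$.
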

We start by proving the following technical lemma.
\begin{lemma}\label{sublemma}
There exist positive numbers $C_0$ and $r_0$ such that for any $x\in A$ and any $r<r_0$\textup:
\begin{equation}\label{technicalineq}
\frac{1}{\H_d(A\cap B(y,r))} \cdot \frac{1}{f(|x-y|)} \cdot \int_{A\cap B(y,r)} f(|x-z|)\textup{d}\H_d(z) \leqslant C_0.
\end{equation}
\end{lemma}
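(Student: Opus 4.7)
The plan is to split the proof by the relative size of $|x-y|$ and $r$, after first deriving a local doubling property of $f$ from the hypotheses of the definition of $d$-Riesz-like. First I would show that there is a constant $D>0$ with $f(t/2) \leq D f(t)$ for all $t \in (0,\diam(A)]$. For $t \in (0, t_\ep]$, the monotonicity of $t \mapsto t^{d-\ep} f(t)$ directly gives $f(t/2) \leq 2^{d-\ep} f(t)$; for $t \in [t_\ep, \diam(A)]$ the function $f$ is continuous and bounded below by $f(\diam(A))>0$, so $f(t/2)/f(t)$ is uniformly bounded on this compact interval. Then I would set $r_0 := t_\ep/3$.

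In the far case $|x-y| \geq 2r$: every $z \in B(y,r)$ satisfies $|x-z| \geq |x-y|/2$, so by monotonicity of $f$ and the doubling step, $f(|x-z|) \leq f(|x-y|/2) \leq D f(|x-y|)$. Plugging this into \eqref{technicalineq} yields a bound by $D$.

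In the near case $|x-y| < 2r$: use $B(y,r) \subset B(x,3r)$ and decompose $A \cap B(x,3r)$ into dyadic annuli $A_k := \{z \in A : 3r \cdot 2^{-k-1} < |x-z| \leq 3r \cdot 2^{-k}\}$, $k \geq 0$. On $A_k$, monotonicity of $f$ and of $t \mapsto t^{d-\ep} f(t)$ (applicable since $3r \leq t_\ep$) yield
$$
f(|x-z|) \leq f(3r \cdot 2^{-k-1}) \leq 2^{(k+1)(d-\ep)} f(3r),
$$
while $d$-regularity gives $\H_d(A_k) \leq C (3r \cdot 2^{-k})^d$. Summing produces a factor $\sum_k 2^{-k\ep}$, which converges because $\ep > 0$, and one obtains
$$
\int_{A \cap B(x,3r)} f(|x-z|)\, d\H_d(z) \leq C_1\, r^d f(3r).
$$
Since $|x-y| < 2r < 3r$ and $f$ is decreasing, $f(3r) \leq f(|x-y|)$; combining this with the lower bound $\H_d(A \cap B(y,r)) \geq c r^d$ from $d$-regularity yields the desired uniform bound in the near case. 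Taking $C_0$ to be the maximum of the constants from the two cases completes the proof.

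The main obstacle is the near case, where $f(|x-y|)^{-1}$ in the denominator is not small and the integrand $f(|x-z|)$ may blow up near $z=x$. The strict inequality $\ep>0$ is exactly what makes the estimate work: it supplies a summable geometric factor $2^{-k\ep}$ in the dyadic decomposition, compensating the $d$-dimensional scaling $\H_d(A_k) \lesssim (3r \cdot 2^{-k})^d$. Without this $\ep$-margin the sum would only be logarithmically divergent and the estimate would fail for borderline Riesz parameters.
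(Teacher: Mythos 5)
Your proof is correct. The far case ($|x-y|\geqslant 2r$) is essentially identical to the paper's: both reduce to the doubling estimate $f(t/2)\leqslant Df(t)$, obtained from the monotonicity of $t\mapsto t^{d-\ep}f(t)$ on $[0,t_\ep]$ and a compactness bound on $[t_\ep,\diam(A)]$. In the near case the two arguments diverge in technique while exploiting the same mechanism: the paper writes the integral via the distribution function, splits the $u$-integration at $f(|x-y|+r)$, and bounds the tail $\int_{f(3r)}^{\infty}(f^{-1}(u))^{d}\,\textup{d}u$ using the fact that $u^{1/(d-\ep)}f^{-1}(u)$ is decreasing; you instead integrate over dyadic annuli $A_k$ centered at $x$, using $\H_d(A_k)\lesssim (3r\cdot 2^{-k})^{d}$ and $f(3r\cdot 2^{-k-1})\leqslant 2^{(k+1)(d-\ep)}f(3r)$, and sum the resulting geometric series $\sum_k 2^{-k\ep}$. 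Both routes deliver the same bound $\int_{A\cap B(y,r)}f(|x-z|)\,\textup{d}\H_d(z)\lesssim r^{d}f(3r)$ and both hinge on the strict margin $\ep>0$ (in the paper it makes the exponent $-d/(d-\ep)$ integrable at infinity; in yours it makes the dyadic sum converge). Your version is somewhat more elementary in that it avoids manipulating $f^{-1}$ and the layer-cake formula, at the cost of introducing the annular decomposition; the paper's version has the side benefit that the tail estimate \eqref{inequalityeight} is reused verbatim in the proof of Theorem \ref{minimumpr} to show that $f$-negligible sets are $\H_d$-null, so if you adopt your variant you would need to supply that uniform bound on $U_f^{\H_d}$ separately (which your annular estimate also yields with $r$ comparable to $\diam(A)$, after a minor adjustment since $\diam(A)$ need not be below $t_\ep$).
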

\begin{proof}
Notice that the left-hand side of \eqref{technicalineq} is equal to
$$
\frac{1}{\H_d(A\cap B(y,r))} \cdot \frac{1}{f(|x-y|)} \cdot \int_0^\infty \H_d(z\in A\cap B(y,r)\colon f(|x-z|)>u) \textup{d}u.
$$
Since $f$ is decreasing, we see that
$$
\{z\in A\cap B(y,r)\colon f(|x-z|)>u\} = \{z\in A\cap B(y,r)\cap B(x, f^{-1}(u))\}.
$$
This set is empty when $f^{-1}(u)<|x-y|-r$ or $u>f(|x-y|-r)$.

We consider two cases.
\paragraph{\textbf{Case 1}: $|x-y|>2r$}

Then we obtain the estimate
\begin{align}
\begin{split}
&\frac{1}{\H_d(A\cap B(y,r))} \cdot \frac{1}{f(|x-y|)} \cdot \int_0^\infty \H_d(z\in A\cap B(y,r)\colon f(|x-z|)>u) \textup{d}u  \\
&=\frac{1}{\H_d(A\cap B(y,r))} \cdot \frac{1}{f(|x-y|)} \cdot \int_0^{f(|x-y|-r)} \H_d(z\in A\cap B(y,r)\colon f(|x-z|)>u) \textup{d}u  \\
&\leqslant \frac{f(|x-y|-r)}{f(|x-y|)}.
\end{split}
\end{align}
Since $|x-y|>2r$, we have $|x-y|-r \geqslant |x-y|/2$; thus,
$$
\frac{1}{\H_d(A\cap B(y,r))} \cdot \frac{1}{f(|x-y|)} \cdot \int_0^\infty \H_d(z\in A\cap B(y,r)\colon f(|x-z|)>u) \textup{d}u \leqslant \frac{f(|x-y|/2)}{f(|x-y|)}.
$$
Finally, if recall that the function $t\mapsto f(t)\cdot t^{d-\ep}$ is increasing for $t\in [0, t_\ep]$. If $|x-y|>t_\ep$, we get
$$
\frac{f(|x-y|/2)}{f(|x-y|)} \leqslant \frac{f(t_\ep/2)}{f(\textup{diam}(A))}.
$$
If $|x-y|\leqslant t_\ep$, we use that
$$
f(|x-y|/2) \cdot (|x-y|/2)^{d-\ep} \leqslant f(|x-y|) \cdot (|x-y|)^{d-\ep};
$$
thus
$$
\frac{f(|x-y|/2)}{f(|x-y|)} \leqslant 2^{d-\ep}.
$$

\paragraph{\textbf{Case 2}: $|x-y|\leqslant 2r$}
Again, we need only integrate for $u\leqslant f(|x-y|-r)$. Setting $f$ equal to $f(0)$ for any negative argument, we write
\begin{multline}
\frac{1}{\H_d(A\cap B(y,r))} \cdot \frac{1}{f(|x-y|)} \cdot \int_0^{f(|x-y|-r)} \H_d(z\in A\cap B(y,r)\colon f(|x-z|)>u) \textup{d}u = \\
\frac{1}{\H_d(A\cap B(y,r))} \cdot \frac{1}{f(|x-y|)} \cdot \left( \int_0^{f(|x-y|+r)} + \int_{f(|x-y|+r)}^{f(|x-y|-r)} \H_d(z\in A\cap B(y,r)\colon f(|x-z|)>u) \textup{d}u\right)
\end{multline}
Trivially,
\begin{align}
\begin{split}
\frac{1}{\H_d(A\cap B(y,r))} \cdot &\frac{1}{f(|x-y|)} \cdot  \int_0^{f(|x-y|+r)}\H_d(z\in A\cap B(y,r)\colon f(|x-z|)>u) \textup{d}u \\ &\leqslant\frac{f(|x-y|+r)}{f(|x-y|)} \leqslant 1.
\end{split}
\end{align}
Furthermore, since $|x-y|\leqslant 2r$, we have $f(|x-y|)\geqslant f(2r)$; thus,
\begin{align}
\begin{split}
\frac{1}{\H_d(A\cap B(y,r))} \cdot &\frac{1}{f(|x-y|)} \int_{f(|x-y|+r)}^{f(|x-y|-r)} \H_d(z\in A\cap B(y,r)\colon f(|x-z|)>u) \textup{d}u \\
&\leqslant
\frac{1}{c\cdot r^d} \cdot \frac{1}{f(2r)}\int_{f(|x-y|+r)}^{\infty} \H_d(z\in A\cap B(x, f^{-1}(u)) \textup{d}u  \\
&\leqslant \frac{C/c}{r^d f(2r)} \int_{f(|x-y|+r)}^{\infty} (f^{-1}(u))^d \textup{d}u.
\end{split}
\end{align}
Note that the assumption $|x-y|\leqslant 2r$ implies $f(|x-y|+r)\geqslant f(3r)$, and thus
\begin{align}
\begin{split}
\frac{1}{\H_d(A\cap B(y,r))} \cdot &\frac{1}{f(|x-y|)}\int_{f(|x-y|+r)}^{f(|x-y|-r)} \H_d(z\in A\cap B(y,r)\colon f(|x-z|)>u) \textup{d}u \\ &\leqslant\frac{C/c}{r^d f(2r)} \int_{f(3r)}^{\infty} (f^{-1}(u))^d \textup{d}u.
\end{split}
\end{align}
We now observe that our assumption that $f(t)t^{d-\ep}$ is increasing on $[0, t_\ep]$ implies that the function $u^{1/(d-\ep)}f^{-1}(u)$ is decreasing on $[f(t_\ep), \infty)$. Therefore, for $3r<t_\ep$ we have
\begin{multline}\label{inequalityeight}
\frac{C/c}{r^d f(2r)} \int_{f(3r)}^{\infty} (f^{-1}(u))^d \textup{d}u = \frac{C/c}{r^d f(2r)} \int_{f(3r)}^{\infty} (u^{1/(d-\ep)}f^{-1}(u))^d \cdot u^{-d/(d-\ep)} \textup{d}u  \\
\leqslant \frac{C_1}{r^d f(2r)} \cdot f(3r)^{d/(d-\ep)} \cdot (3r)^d \cdot f(3r)^{1-d/(d-\ep)} \leqslant C_2.
\end{multline}

\end{proof}
\begin{proof}[Proof of Proposition \ref{convolv}]
We formally define $f(0):=\lim_{t\to 0^+} f(t)\in (0, \infty]$. Without loss of generality, we consider the case $f(0)=\infty$; otherwise the potential $U_f^\mu$ is continuous on $\R^p$ and the proposition holds trivially.

Define
$$
\Phi^\mu_r(y):= \frac{1}{\H_d(A\cap B(y,r))} \int_{A\cap B(y,r)} U_f^\mu(z)\textup{d}\H_d(z).
$$
Tonneli's theorem and Lemma \ref{sublemma} imply
\begin{align}\label{tonneli}
\begin{split}
&\Phi^\mu_r(y)= \frac{1}{\H_d(A\cap B(y,r))} \int_{A\cap B(y,r)} \int_A f(|x-z|) \textup{d}\mu (x) \textup{d}\H_d(z) \\
&=\int_A f(|x-y|) \cdot \frac{1}{\H_d(A\cap B(y,r))}\cdot \frac{1}{f(|x-y|)} \int_{A\cap B(y,r)}f(|x-z|)\textup{d}\H_d(z)\textup{d}\mu(x) \\ &\leqslant C_0 \int_A f(|x-y|) \textup{d}\mu(x) = C_0 U_f^{\mu}(y).
\end{split}
\end{align}

We first suppose $U^\mu(y)=\infty$. Since $U^\mu$ is lower semi-continuous, we obtain that for any large number $N$ there is a positive number $r_N$, such that $U^\mu(x)>N$ in $B(y, r_N)$. Then for any $r<r_N$ we get
$$
\Phi^\mu_r(y)=\frac{1}{\H_d(A\cap B(y,r))} \int_{A\cap B(y,r)}U_f^\mu(z)\textup{d}\H_d(z) >N.
$$
This implies that $\Phi^\mu_r(y)\to \infty = U_f^\mu(y)$ as $r\to 0^+$.

\bigskip

Now assume $U_f^\mu(y) = \int_A f(|x-y|)\textup{d}\mu(x)<\infty$. Notice that since $f(0)=\infty$, the measure $\mu$ cannot have a mass point at $y$. Consequently, for any $\eta>0$ there exists a ball $B(y,\delta)$, such that
$$
\int_{B(y,\delta)} f(|x-y|)\textup{d}\mu(x) < \eta.
$$
Consider measures
$$
\textup{d}\mu':=\mathbbm{1}_{B(y,\delta)} \textup{d}\mu, \; \; \mbox{and} \; \; \mu_c:=\mu - \mu'.
$$
Since $y\not\in\textup{supp}(\mu_c)$, the potential $U_f^{\mu_c}$ is continuous at $y$. This implies
$$
\Phi^{\mu_c}_r(y) = \frac{1}{\H_d(A\cap B(y,r))} \int_{A\cap B(y,r)}U_f^{\mu_c}(z)\textup{d}\H_d(z)\to U_f^{\mu_c}(y), \; \; r\to 0^+.
$$
Also, on applying \eqref{tonneli} to $\mu'$, it follows that
$$
\Phi^\mu_r(y) = \Phi^{\mu_c}_r(y) + \Phi^{\mu'}_r(y) \leqslant \Phi^{\mu_c}_r(y) + C_0 U_f^{\mu'}(y) \leqslant \Phi^{\mu_c}_r(y)+C_0 \eta.
$$
Taking the $\limsup$, we obtain
\begin{equation}\label{kvaks1}
\limsup_{r\to 0^+}\Phi^{\mu}_r(y) \leqslant U_f^{\mu_c}(y)+C_0\eta \leqslant U_f^{\mu}(y)+C_0\eta.
\end{equation}
On the other hand, we know that
$$
\Phi^{\mu}_r(y) = \Phi^{\mu_c}_r(y) + \Phi^{\mu'}_r(y)\geqslant \Phi^{\mu_c}_r(y),
$$
and thus
$$
\liminf_{r\to 0^+} \Phi^{\mu}_r(y)  \geqslant U_f^{\mu_c}(y) = U_f^{\mu}(y) - U_f^{\mu'}(y) \geqslant U_f^{\mu}(y)-\eta.
$$
This, together with \eqref{kvaks1} and the arbitrariness of $\eta$ implies the assertion of Proposition \ref{convolv}.
\end{proof}
%
We are ready to deduce Theorem \ref{minimumpr}.
\begin{proof}[Proof of Theorem \ref{minimumpr}]
We first claim that any $f$-negligible subset $E$ of $A$ has $\H_d$-measure zero. Indeed, take any compact set $E_1$ inside our $f$-negligible set $E$. Then for any $y\in A$
$$
U_f^{\H_d}(y)= \int_A f(|x-y|)\textup{d}\H_d(x) = \int_{0}^{\infty} \H_d(x\in A\cap B(y, f^{-1}(u)))\textup{d}u.
$$
We notice that if $u<f(\textup{diam}(A))=:u_0$, then $f^{-1}(u)>\textup{diam}(A)$, and so $A\cap B(y, f^{-1}(u)) = A$. Thus,
\begin{align}\label{kvakskoaks}
\begin{split}
U_f^{\H_d}(y) &= \left(\int_{0}^{u_0} + \int_{u_0}^\infty\right) \H_d(x\in A\cap B(y, f^{-1}(u)))\textup{d}u \\
&\leqslant u_0 \H_d(A) + C \int_{u_0}^\infty (f^{-1}(u))^d \textup{d}u,
\end{split}
\end{align}
which is bounded by a constant that does not depend on $y$, as proved in the Case 2 of Lemma \ref{sublemma}; see inequality \eqref{inequalityeight}. Since the set $E$ is negligible, we conclude that $\H_d(E_1)=0$. Thus, for any compact subset $E_1$ of $E$ we have $\H_d(E_1)=0$ and so $\H_d(E)=0$ as claimed.
Now, let the measure $\mu$ satisfy \eqref{biggeroutsidenegl}. Then for any $y\in A$, we deduce from Proposition \ref{convolv} and the fact that
$
U_f^{\mu}(z)\geqslant M
$
holds $\H_d$-a.e. on $A$ that
$
U_f^\mu(y)\geqslant M
$.
\end{proof}
\section{Proof of Theorem \ref{maintheorem} and Theorem \ref{weaklimitpolar}}

\begin{proof}[Proof of Theorem \ref{maintheorem}]
For any increasing infinite subsequence $\mathcal{N}\subset \mathbb{N}$, choose a subsequence $\mathcal{N}_1$, such that
$$
\liminf_{N\in \mathcal{N}} P_f(\nu_N)=\lim_{N\in \mathcal{N}_1} P_f(\nu_N).
$$
For each $N\in \mathcal{N}_1$ take a point $y_N$, such that $P_f(\nu_N)=U^{\nu_N}(y_N)$. Passing to a further subsequence $\mathcal{N}_0\subset \mathcal{N}_1$ we can assume $y_N\to y_\infty$ as $N\to \infty$, $N\in \mathcal{N}_0$. Then the principle of descent, Theorem \ref{thdescent}, implies
\begin{equation}\label{tralyalyaleq}
\liminf_{N\in \mathcal{N}} P_f(\nu_N) = \lim_{N\in \mathcal{N}_0} U_f^{\nu_N}(y_N) \geqslant U_f^\nu(y_\infty) \geqslant P_f(\nu).
\end{equation}
Furthermore, for any $y\in A$ we have
$$
\liminf_{N\in \mathcal{N}} U_f^{\nu_N}(y) \geqslant \liminf_{N\in \mathcal{N}} P_f(\nu_N)=: M_f(\mathcal{N}), \; \; y\in A.
$$
By Theorem \ref{thenvelope}, $\liminf_{N\in \mathcal{N}} U_f^{\nu_N}(y) = U_f^{\nu}(y)$ for every $y\in A\setminus E$, where $E$ is an $f$-negligible set that can depend on $\mathcal{N}$. Therefore,
$$
U_f^{\nu}(y)\geqslant M_f(\mathcal{N}), \; \; y\in A\setminus E.
$$
From the minimum principle, Theorem \ref{minimumpr}, we deduce that
$$
U_f^{\nu}(y)\geqslant M_f(\mathcal{N}) =\liminf_{N\in \mathcal{N}} P_f(\nu_N), \; \; \forall y\in A,
$$
and therefore
\begin{equation}\label{tralyalyageq}
P_f(\nu)\geqslant \liminf_{N\in \mathcal{N}} P_f(\nu_N).
\end{equation}
Combining estimates \eqref{tralyalyaleq} and \eqref{tralyalyageq}, we deduce that for any subsequence $\mathcal{N}$ we have
$$
\liminf_{N\in \mathcal{N}} P_f(\nu_N) = P_f(\nu).
$$
This immediately implies
$$
\lim_N P_f(\nu_N)=P_f(\nu),
$$
which completes the proof.
\end{proof}

\begin{proof}[Proof of Theorem \ref{weaklimitpolar}]
Assume for a subsequence $\mathcal{N}$ we have $\nu_N \stackrel{*}{\to} \nu^*$ as $N\to \infty$, $N\in \mathcal{N}$.
From \cite{Ohtsuka1967} we know that
$$
P_f(\nu_N)\to T_f(A), \;\; N\to \infty.
$$
On the other hand, from Theorem \ref{maintheorem} we know that
$$
P_f(\nu_N)\to P_f(\nu^*), \;\; N\to \infty, \;\; N\in \mathcal{N}.
$$
Therefore,
$$
P_f(\nu^*)= T_f(A),
$$
which proves the theorem.
\end{proof}
\begin{zamech*}
{\color{black}Suppose $A=\cup_{k=1}^m A_k$, where $A_k$ is a $d_k$-regular compact set, and that,  for some positive number $\delta$,  we have $\textup{dist}(A_{i}, A_{j})\geqslant \delta$ for  $i\not=j$. Further assume that $f$ is a $d_k$-Riesz-like kernel for every $k=1,\ldots, m$, and $\mu$ is a measure supported on $A$. Then the result of Theorem \ref{minimumpr}, and thus of Theorems \ref{maintheorem} and \ref{weaklimitpolar} hold. To see this, we first show that every $y\in A_k$ is a weak $d_k$-Lebesgue point of $U_f^\mu$.
Indeed, setting $\textup{d}\mu_k := \mathbbm{1}_{A_k} \textup{d}\mu$ yields
$$
U_f^\mu(y)=\sli_{k=1}^m U_f^{\mu_k}(y).
$$
Proposition \ref{convolv} implies that if $y\in A_k$, then $y$ is a weak $d_k$-Lebesgue point of $U_f^{\mu_k}$. Moreover, for any $j\not=k$ we have $y\not\in \supp(\mu_j)$; thus $U_f^{\mu_j}$ is continuous at $y$, and our assertion about weak Lebesgue points of $U_f^{\mu}$ follows.
Similar to \eqref{kvakskoaks}, we then deduce that if a set $E\subset A$ is $f$-negligible, then $\H_{d_k}(E\cap A_k)=0$ for every $k=1,\ldots, m$; therefore, the assertion of Theorem \ref{minimumpr} remains true and the proofs of Theorems \ref{maintheorem} and \ref{weaklimitpolar} go exactly as before. }
\end{zamech*}

\bibliography{ref}

\bibliographystyle{plain}
\end{document}